\newcommand{\intL}{\int\limits}
\newcommand{\intR}{\int\limits_{\mathbb{R}} }
\newcommand{\intRR}{\int\limits_{\mathbb{R}^2} }
\newcommand{\RR}{\mathbb{R}}
\newcommand{\CC}{\mathbb{C}}
\newcommand{\half}{^\infty_0}
\newtheorem{thm}{Theorem}
\newtheorem{rmk}[thm]{Remark}
\newtheorem{cor}[thm]{Corollary}
\title{Inversions of the windowed ray transform}
\author{Sunghwan Moon\thanks{E-mail address: shmoon@math.tamu.edu}}
\date{}
\begin{document}
\maketitle
\begin{abstract}
The windowed ray transform is a natural generalization of the ``Analytic-Signal Transform'' which is developed to extend arbitrary functions from $\RR^n$ to $\CC^n$. 
The X-ray transform is also the special case of this transform.
Similarly to the X-ray transform, the problem of inverting this transform is overdetermined. Hence it is highly possible to existence of several inversion formulas.

Kaiser obtained inversion formula in 1993. We present several inversion formulas here. One of them is similar to that of Kaiser, but require weaker condition. The others are new. 
\end{abstract}

%%%%%%%%%%%%%%%%%%%%%%%%%%%%%%%%%%%%%%%%%%%%%%%%%%%%%%%%%%%%%%
\section{Introduction}
%%%%%%%%%%%%%%%%%%%%%%%%%%%%%%%%%%%%%%%%%%%%%%%%%%%%%%%%%%%%%%%555
The windowed ray transform was introduced in~\cite{kaisers93} by Kaiser and Streater. It is a natural generalization of the ``Analytic-Signal Transform''~\cite{kaiser09} arising from a method for extending arbitrary functions from $\RR^n$ to $\mathbb{C}^n$ in a semi-analytic way in relativistic quantum theory. Namely, the Analytic-Signal Transform of $f\in \mathcal{S}(\RR^n)$ is the function $g:\CC^n\rightarrow\CC$ defined by 
$$
g(u+iv)=\frac{1}{2\pi i}\intR \frac{f(u+\tau v)}{\tau-i}d\tau.
$$
Its generalization, the windowed ray transform, is defined as 
$$
P_hf(u,v)=\intR f(u+tv)h(t)dt, \mbox{  for } (u,v)\in\RR^n\times\RR^n\setminus 0.
$$
Here $h$ is regarded as a window, which explains the terminology ``windowed ray transform.''
When $h=1$ and $||v||=1$, it becomes the usual X-ray transform.
In order to minimize analytical subtleties, we assume that $h$ is smooth with rapid decay, i.e., $h\in\mathcal{S}(\RR)$.

While $f$ depends upon $n$-dimensional variable, $P_hf$ depends upon $2n$-dimensional variable. 
Hence the problem of inverting the windowed ray transform is $n$-dimensions overdetermined. 
Here we will study the inversion of the windowed ray transform.
In next section~\ref{recon}, we present several inversion formulas for the transform.
In fact, one of our inversions is similar to an inversion formula Kaiser and Streater already derived in~\cite{kaisers93}, but requiers weeker conditions. %Also, given some Radon type transform, one is usually interested in a range description as well as in an inversion formula~\cite{helgason99radon,natterer01,nattererw01}. .% We describe the range condition of the windowed ray transform in section~\ref{sec:range}.

%%%%%%%%%%%%%%%%%%%%%%%%%%%%%%%%%%%%%%%%%%%%%%%%%%
\section{Reconstruction}\label{recon}
%%%%%%%%%%%%%%%%%%%%%%%%%%%%%%%%%%%%%%%%%
%In this section, we provide several inversion formulas.
\begin{thm}\label{inversion}
Let $h\in\mathcal S(\RR)$ be non-zero.
Then $f\in \mathcal{S}(\RR^n)$ can be reconstructed from $P_hf$ as follows:
$$
f(x)= \pi^{-\frac{n+1}{2}}\Gamma(n/2)\left(\intR|\hat{h}(-t)|^2dt\right)^{-1}\intL_{\RR^n}\intR P_hf(x-vt,v)I^{-1}h(-t)|v|^{-n}dt dv,
$$
\end{thm}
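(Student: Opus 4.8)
The plan is to pass to the Fourier side and exploit a projection--slice identity for $P_h$. With the convention $\hat f(\xi)=\int_{\RR^n}f(x)e^{-ix\cdot\xi}\,dx$, the substitution $w=u+tv$ in the defining integral gives at once
$$
\widehat{P_hf}(\xi,v)=\hat f(\xi)\,\hat h(-v\cdot\xi),
$$
where the hat on $P_hf$ is taken in the first ($u$) variable only. This is the analogue of the Fourier slice theorem for the windowed ray transform, and it is the engine of the whole argument: it converts the reconstruction into an algebraic identity for $\hat f$.

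Next I would substitute this into the claimed right--hand side. Write $J(x)$ for the double integral $\int_{\RR^n}\int_{\RR}P_hf(x-vt,v)\,I^{-1}h(-t)\,|v|^{-n}\,dt\,dv$, expand $P_hf(x-vt,v)$ by Fourier inversion in its first variable, and carry out the $t$--integration first. It decouples and becomes the Fourier transform of $I^{-1}h$ evaluated at $-v\cdot\xi$; since $I^{-1}$ is the Riesz operator acting as the multiplier $|s|$, this produces the factor $|v\cdot\xi|\,\hat h(-v\cdot\xi)$, and one is left with
$$
J(x)=(2\pi)^{-n}\intL_{\RR^n}e^{ix\cdot\xi}\hat f(\xi)\,M(\xi)\,d\xi,\qquad M(\xi)=\intL_{\RR^n}|\hat h(-v\cdot\xi)|^2\,|v\cdot\xi|\,|v|^{-n}\,dv.
$$

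The crux is the evaluation of $M(\xi)$, where the homogeneity is engineered to collapse. Passing to polar coordinates $v=r\omega$, the weights combine as $|v\cdot\xi|\,|v|^{-n}\,r^{n-1}=r^{0}|\omega\cdot\xi|$, so the radial part reduces to $|\omega\cdot\xi|\int_0^\infty|\hat h(-r\,\omega\cdot\xi)|^2\,dr$. A change of variable in $r$ shows this equals $\int_{-\infty}^0|\hat h|^2$ when $\omega\cdot\xi>0$ and $\int_0^\infty|\hat h|^2$ when $\omega\cdot\xi<0$; integrating over the two hemispheres, each of measure $\tfrac12|S^{n-1}|$, gives
$$
M(\xi)=\tfrac12\,|S^{n-1}|\intR|\hat h(-t)|^2\,dt,
$$
which is independent of $\xi$. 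Feeding this back and applying Fourier inversion yields $J(x)=M\,f(x)$ with $M$ the constant above; solving for $f$ and using $\tfrac{2}{|S^{n-1}|}=\Gamma(n/2)/\pi^{n/2}$ produces the stated prefactor $\pi^{-(n+1)/2}\Gamma(n/2)\big(\intR|\hat h(-t)|^2\,dt\big)^{-1}$, with the remaining power of $\pi$ fixed by the normalizations chosen for the Fourier transform and for $I^{-1}$.

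I expect the main obstacle to be not these formal manipulations but the justification of the interchanges of integration, and in particular the behaviour of the $dv$--integral near $v=0$, where $|v|^{-n}$ is not locally integrable on its own. The point is that the factor $|v\cdot\xi|$ supplied by $I^{-1}$ improves the radial homogeneity from $r^{-1}$ to $r^{0}$, which is exactly what restores integrability at the origin; this is presumably the mechanism by which the present formula requires weaker hypotheses on $h$ than the one of Kaiser and Streater, which must instead control $\hat h$ near $s=0$. Since $f,h\in\mathcal S$, the remaining decay is more than enough to apply Fubini and Fourier inversion rigorously, so once the origin is handled the argument closes.
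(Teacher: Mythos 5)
Your route is the same as the paper's: both rest on the slice identity $\widehat{P_hf}(\xi,v)=\hat f(\xi)\hat h(-v\cdot\xi)$ and on evaluating, in polar coordinates, the weighted integral $M(\xi)=\intL_{\RR^n}|\hat h(-v\cdot\xi)|^2|v\cdot\xi|\,|v|^{-n}dv$; whether one then ``solves for $\hat f$'' (the paper) or ``verifies the stated formula'' (you) is purely presentational. Your evaluation of $M$ is in fact tidier: integrating in $r$ first and observing that the result is constant on the two hemispheres $\{\pm\,\omega\cdot\xi>0\}$ gives $M=\tfrac12|S^{n-1}|\intR|\hat h|^2$ at once, whereas the paper reduces the spherical integral via $\intL_{S^{n-1}}g(\omega\cdot\theta)d\theta=|S^{n-2}|\intL_{-1}^1g(t)(1-t^2)^{(n-3)/2}dt$ and then needs a beta-function evaluation; both yield the same geometric constant $\tfrac12|S^{n-1}|=\pi^{n/2}/\Gamma(n/2)$. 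Your closing remark that the factor $|v\cdot\xi|$ is what restores integrability of $|v|^{-n}$ at $v=0$, and is what removes Kaiser--Streater's admissibility hypothesis, is exactly the point of the theorem.

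Two steps are genuinely gapped, however. (i) \emph{The conjugation.} With the stated kernel $I^{-1}h(-t)$, the $t$-integration produces $|v\cdot\xi|\,\hat h(-v\cdot\xi)$, so what multiplies $\hat f(\xi)$ is the \emph{unconjugated} square $|v\cdot\xi|\,\hat h(-v\cdot\xi)^2$; rewriting this as $|v\cdot\xi|\,|\hat h(-v\cdot\xi)|^2$, as you do, is valid only when $\hat h$ is real-valued (e.g.\ $h$ even). For general real $h$ the hemisphere argument then breaks: the radial integrals are complex, and their sum $\intR\hat h(s)^2ds$ equals $\|\hat h_{\mathrm{even}}\|^2_{L^2}-\|\hat h_{\mathrm{odd}}\|^2_{L^2}$, which can vanish for nonzero $h$, so no constant inverts. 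The paper's own proof avoids this by multiplying the slice identity by the conjugate factor $\hat h(+\xi\cdot v)$ (legitimate for real $h$), but the physical-space kernel corresponding to that factor is $I^{-1}h(t)$, not $I^{-1}h(-t)$: the theorem and the paper's proof already disagree by exactly this reflection, and your argument papers over the same mismatch instead of resolving it (e.g.\ by proving the formula with kernel $I^{-1}h(t)$, or by assuming $h$ even). (ii) \emph{The constant.} Your own computation gives the prefactor $\pi^{-n/2}\Gamma(n/2)\bigl(\intR|\hat h(-t)|^2dt\bigr)^{-1}$, and the final appeal to ``normalizations'' to recover the stated $\pi^{-(n+1)/2}\Gamma(n/2)$ is not tenable: changing the Fourier convention rescales $I^{-1}h$ and $\intR|\hat h|^2dt$ only by powers of $2\pi$, so it can never supply a bare $\pi^{-1/2}$. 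In fact $\pi^{-n/2}\Gamma(n/2)$ is the self-consistent value; the stated exponent traces to a slip at the Plancherel step in the paper's own proof, whose last line moreover normalizes by $\intR|h(t)|^2dt$ rather than by the statement's $\intR|\hat h(-t)|^2dt$. The right move was to flag this discrepancy explicitly, not to absorb it into an unspecified convention.
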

where $\widehat{I^{-1}h}(\eta)=|\eta|\hat h(\eta)$.
\begin{proof}
%Let a function $f$ be in $C^\infty_c(\RR^n)$.
Clearly, $P_h$ is invariant under a shift with respect to the first $n$ variables.
Hence taking the Fourier transform with respect to $u$ looks reasonable. 
Doing this, we get
\begin{equation}\label{same}
%\begin{array}{ll}
 \widehat{P_hf}(\xi,v)=\hat{f}(\xi)\intR h(t)e^{itv\cdot\xi}dt=\hat{f}(\xi)\hat{h}(-\xi\cdot v),
%\end{array}
\end{equation}
where $\widehat{P_hf}$ is the Fourier transform of $P_hf$ with respect to first $n$ dimensional variable $u$.

Note that the complex conjugate of $\hat{h}(-\xi)$ is $\hat h(\xi)$.
Multiplying~\eqref{same} by $|v|^{-n}\hat{h}(\xi\cdot v)| \xi\cdot v |$ and integrating with respect to $v$ yield%(1-(\xi\cdot v/|v||\xi|)^2)^{-(n-3)/2}
\begin{equation}\label{negative}
\begin{array}{ll}
 \displaystyle\intL_{\RR^n}\widehat{P_hf}(\xi,v)\hat{h}(\xi\cdot v)|v|^{-n} |\xi\cdot v|dv&\displaystyle=\hat{f}(\xi)\intL^\infty_0\intL_{S^{n-1}}|\hat{h}( r\xi\cdot \theta)|^2   |\xi\cdot \theta| d\theta dr\\
%\displaystyle=\hat{f}(\xi)\intL^{1}_{-1}\hat{h}(-|\xi|t)^2d\theta=\hat{f}(\xi)/|\xi|\intL^{\xi}_{-\xi}\hat{h}(t)^2d\theta\\
%&\displaystyle=|S^{n-2}|\hat{f}(\xi)\intL^1_{-1}\intL^\infty_0|\hat{h}(-r|\xi|t)|^2(1-t^2)^{(n-3)/2}t drdt \\
&\displaystyle=|S^{n-2}|\hat{f}(\xi)\intL^\infty_0\intL^1_{-1}|\hat{h}( r|\xi|t)|^2(1-t^2)^{(n-3)/2}|t||\xi|dtdr,
\end{array}
\end{equation}
where in the first line, we switched from the Cartesian coordinate $v\in\RR^n$ to the polar coordinates $(r,\theta)\in [0,\infty)\times S^{n-1}$ and in the second line, we used the known relation
\begin{equation*}\label{eq:identity}
\intL_{S^{n-1}}f(\omega\cdot\theta)d\theta=|S^{n-2}|\intL^{1}_{-1}f(t)(1-t^2)^{(n-3)/2}dt,
\end{equation*}
for any integrable function $f$ on $\RR$ and $\omega\in S^{n-1}$~\cite{natterer01}.
Using the Fubini-Tonelli theorem, we continue to compute equation~\eqref{negative} as following:
\begin{equation*}%\label{negative}
\begin{array}{ll}
 \displaystyle\intL_{\RR^n}\widehat{P_hf}(\xi,v)\hat{h}(\xi\cdot v)|v|^{-n} |\xi\cdot v|dv\\
\displaystyle=|S^{n-2}|\hat{f}(\xi)\left\{\intL^1_{0}\intL^\infty_0|\hat{h}( r|\xi|t)|^2(1-t^2)^{\frac{(n-3)}2}t|\xi|drdt+\intL^{-1}_{0}\intL^\infty_0|\hat{h}( r|\xi|t)|^2(1-t^2)^{\frac{(n-3)}2}t|\xi|drdt\right\}\\
%\displaystyle=|S^{n-2}|\hat{f}(\xi)\left\{\intL^1_{0}\intL^\infty_0|\hat{h}(-r|\xi|t)|^2(1-t^2)^{(n-3)/2}t|\xi|drdt+\intL^{-1}_{0}\intL^\infty_0|\hat{h}(-r|\xi|t)|^2(1-t^2)^{(n-3)/2}t|\xi|drdt\right\}\\
\displaystyle=|S^{n-2}|\hat{f}(\xi)\left\{\intL^\infty_0|\hat{h}( r)|^2dr \intL^1_{0}(1-t^2)^{\frac{(n-3)}2}dt -\intL^{-\infty}_0|\hat{h}( r)|^2dr \intL^1_{0}(1-t^2)^{\frac{(n-3)}2}dt\right\}\\
\displaystyle=|S^{n-2}|\hat{f}(\xi)\intR|\hat{h}( r)|^2dr \intL^1_{0}(1-t^2)^{(n-3)/2}dt
\displaystyle= \pi^{(n+1)/2}\Gamma(n/2)^{-1}\hat{f}(\xi)\intR|h(t)|^2dt,
\end{array}
\end{equation*}
where in the third line, we changed the variable $r$ to $r/|\xi|t$ and in the last equation, we used the Plancherel formula.
% In the right hand side of~\eqref{negative}, $\int_\RR|h(t)|^2dt $ is always positive.
Thus we have
$$
\begin{array}{ll}
\displaystyle\hat{f}(\xi)= \displaystyle\pi^{-(n+1)/2}\Gamma(n/2)\left(\intR| {h}(t)|^2dt\right)^{-1}\intL_{\RR^n}\widehat{P_hf}(\xi,v)\hat{h}(\xi\cdot v)|v|^{-n} |\xi\cdot v| dv.
%&=-\displaystyle \pi^{-(n+1)/2}\Gamma(n/2)\left(\intL^{\infty}_{0}|\hat{h}(-t)|^2dt\right)^{-1}\intL_{\RR^n}\widehat{P_hf}(\xi,v)\hat{h'}(-\xi\cdot v)|v|^{-n}dv.
\end{array}
$$
Taking the inverse Fourier transform completes the proof.
\end{proof}
%\begin{rmk}\label{rmk:sign}
%Suppose $h\neq0$ on $[0,\infty)$.
%Then similarly to equation~\eqref{negative}, we have
%\begin{equation}\label{positive}
% \displaystyle\intL_{\RR^n}\widehat{P_hf}(\xi,-v)\hat{h}(\xi\cdot v)|v|^{-n} \xi\cdot vdv\displaystyle=\displaystyle2\pi^{(n+1)/2}\Gamma(n/2)^{-1}\hat{f}(\xi)\intL^\infty_0|\hat{h}(t)|^2dt.
%\end{equation}
%Using equation~\eqref{positive} instead of equation~\eqref{negative}, we get a similar inversion.
%\end{rmk}
\begin{rmk}
 This inversion approach is similar to that of~\cite{kaisers93}. We, however, multiply~\eqref{same} by $|v|^{-n}\hat{h}(\xi\cdot v) |\xi\cdot v|$, unlike the factor $|v|^{-n}\hat{h}(\xi\cdot v) $ in~\cite{kaisers93}. 
This makes it unnecessary to require that $h$ is admissible ($\hat{h}(0)=0$). %This condition is required in~\cite{kaisers93}. 
\end{rmk}

%%%%%%%%%%%%%%%%%%%%%%%%%%%%%%%%%%%%%%%%%%%%%%%%%%
%\section{Another inversion}\label{another}
%%%%%%%%%%%%%%%%%%%%%%%%%%%%%%%%%%%%%%%%%
Now we present another inversion formula.
%Thus we have following inversion formula. 
\begin{thm}
Let $h\in\mathcal S(\RR)$ be non-zero.
Then we have for $f\in\mathcal{S}(\RR^n)$
\begin{equation*}\label{eq:inversion}
f(x)=2^{-n-1}\pi^{-n}\left(\intR|{h}(t)|^2dt\right)^{-1}\intL_{\RR^n}\intL\half\widehat{P_hf}(\xi, r\xi/|\xi|)\hat{h}(r|\xi|)e^{i\xi x}|\xi|drd\xi.
\end{equation*}
%where $h^{(n)}$ is the $n$-th derivative of $h$.
\end{thm}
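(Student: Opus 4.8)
The plan is to reuse the Fourier-slice identity~\eqref{same}, but instead of integrating over all directions $v\in\RR^n$ as in Theorem~\ref{inversion}, to feed it only the data taken along the single radial direction $v=r\xi/|\xi|$. First I would substitute $v=r\xi/|\xi|$ into~\eqref{same}; since $\xi\cdot v=r|\xi|$, the slice identity collapses to the one-dimensional relation
\[
\widehat{P_hf}(\xi,r\xi/|\xi|)=\hat f(\xi)\,\hat h(-r|\xi|),
\]
so along this ray the unknown $\hat f(\xi)$ appears merely modulated by the scalar factor $\hat h(-r|\xi|)$.

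Next I would multiply by $\hat h(r|\xi|)$ and invoke, exactly as in the proof of Theorem~\ref{inversion}, that the complex conjugate of $\hat h(-s)$ is $\hat h(s)$, so that $\hat h(-r|\xi|)\hat h(r|\xi|)=|\hat h(r|\xi|)|^2$. Multiplying by $|\xi|$ and integrating in $r$ over $(0,\infty)$ then yields
\[
\intL\half\widehat{P_hf}(\xi,r\xi/|\xi|)\,\hat h(r|\xi|)\,|\xi|\,dr=\hat f(\xi)\,|\xi|\intL\half|\hat h(r|\xi|)|^2\,dr.
\]
The decisive simplification is the change of variable $s=r|\xi|$, which absorbs the factor $|\xi|$ and strips all $\xi$-dependence from the right-hand side, leaving $\hat f(\xi)\intL\half|\hat h(s)|^2\,ds$. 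This is precisely where restricting to one direction pays off: no integration over the sphere $S^{n-1}$ occurs, so none of the $|S^{n-2}|$ or Beta-function factors of Theorem~\ref{inversion} enter, and the proof is correspondingly shorter.

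To finish, I would evaluate the $\xi$-independent constant $\intL\half|\hat h(s)|^2\,ds$ using that $|\hat h|^2$ is even — again a consequence of the conjugation identity above, since taking moduli gives $|\hat h(-s)|=|\hat h(s)|$ — to rewrite the half-line integral as half of $\intR|\hat h(s)|^2\,ds$, and then apply the Plancherel formula to express it through $\intR|h(t)|^2\,dt$. Since $h\not\equiv0$ forces $\hat h\not\equiv0$, this constant is strictly positive, so one may divide to solve for $\hat f(\xi)$; taking the inverse Fourier transform in $\xi$ then produces the stated formula, with the overall numerical factor collected from the Plancherel constant together with the $(2\pi)^{-n}$ normalization of the inverse transform.

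I expect the obstacles here to be bookkeeping rather than conceptual, since this argument is genuinely simpler than that of Theorem~\ref{inversion}. The two points to handle with care are the Fubini--Tonelli interchange of the $r$- and $\xi$-integrations, which is immediate from $f,h\in\mathcal S$, and the verification that restricting $v$ to the ray through $\xi$ loses no information — guaranteed precisely by $\intL\half|\hat h(s)|^2\,ds>0$. Beyond that, the only real care is in tracking the constant through the change of variable and the Plancherel step.
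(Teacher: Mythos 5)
Your proof is correct and takes essentially the same route as the paper's: you obtain the radial slice identity $\widehat{P_hf}(\xi,r\xi/|\xi|)=\hat f(\xi)\hat h(-r|\xi|)$ in one line by specializing~\eqref{same} to $v=r\xi/|\xi|$, whereas the paper re-derives the very same identity (its equation~\eqref{diff}) via a shift-invariance argument in $\tau$ with $u\in\theta^\perp$, and from that point on the two arguments coincide step for step (multiply by $\hat h(r|\xi|)$, integrate in $r$, rescale $s=r|\xi|$, use evenness of $|\hat h|^2$ and Plancherel, then invert the Fourier transform). One caveat: tracked through with the paper's own normalizations, both your computation and the paper's proof body yield the prefactor $2^{1-n}\pi^{-n}$ rather than the stated $2^{-n-1}\pi^{-n}$, so that discrepancy lies in the theorem statement itself, not in your argument.
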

\begin{proof}
Let us consider $P_hf(u+\tau v,v)$ for $u\cdot v=0$ and $\tau\in \RR$.
Then we have 
$$
P_hf(u+\tau v,v)=\intR f(u+\tau v+tv)h(t)dt=\intR f(u+tv)h(t-\tau)dt.
$$
Switching from the Cartesian coordinate $v\in\RR^n$ to the polar coordinates $(r,\theta)\in [0,\infty)\times S^{n-1}$, we get
$$
P_hf(u+\tau r\theta,r\theta)=\intR f(u+tr\theta)h(t-\tau)dt.
$$
Then $P_h$ is invariant under a shift with respect to $\tau$.
Taking the Fourier transform with respect to $\tau$ looks reasonable.
To get $\hat{f}(\sigma\theta)$, we take the Fourier transform with respect to $\tau$ and integrate with respect to $u\in \theta^\perp$ so that
$$
\begin{array}{ll}
\widehat{P_hf}(\sigma/r \theta,r\theta)&\displaystyle=r\intL_{\theta^\perp}\intR P_hf(u+\tau r\theta,r\theta)e^{-i\sigma\tau}d\tau du=r\intL_{\theta^\perp}\intR f(u+tr\theta)e^{-i\sigma t}dt du\;\hat{h}(-\sigma)\\
&\displaystyle=\intL_{\theta^\perp}\hat{f}(u+\sigma /r\theta)du\;\hat{h}(-\sigma)=\hat{f}(\sigma/r\theta)\hat{h}(-\sigma),
\end{array}
$$
or 
\begin{equation}\label{diff}
\widehat{P_hf}( \sigma \theta,r\theta)=\hat{f}(\sigma\theta)\hat{h}(-r\sigma),
\end{equation}
where $\hat f$ and $\widehat{P_hf}$ are the $n$-dimensional Fourier transforms of $f$ and $P_hf$ with respect to $x$ and $u$, respectively.
Multiplying by $\hat{h}( r\sigma)$ and integrating equation~\eqref{diff} with respect to $r$ yield
\begin{equation*}\label{newnegative}
\intL^\infty_0\widehat{P_hf}(\sigma \theta,r\theta)\hat{h}(r\sigma)dr=\hat{f}(\sigma\theta)\intL^\infty_0|\hat{h}(r\sigma)|^2dr=\hat{f}(\sigma\theta)|\sigma|^{-1}\intL^\infty_0|\hat{h}(r)|^2dr.
\end{equation*}
We have
%Since $\int^\infty_0\hat{h}(t)^2dt$ is always positive,
$$
\hat{f}(\sigma\theta)=\left(\intL^\infty_0|\hat{h}(\eta)|^2d\eta\right)^{-1}|\sigma|\intL^\infty_0\widehat{P_hf}(\sigma \theta,r\theta)\hat{h}(r\sigma)dr,
$$
or
\begin{equation}\label{eq:inversion}
f(x)=(2\pi)^{-n}\left(\intL\half|\hat{h}(\eta)|^2d\eta\right)^{-1}\intL_{S^{n-1}}\intL\half\left(\intL\half\widehat{P_hf}(\sigma\theta, r\theta)\hat{h}(r\sigma)dr\right)e^{i\sigma \theta\cdot x}\sigma^nd\sigma d\theta.
\end{equation}
Note that 
$$
\intL\half |\hat h(\eta)|^2d\eta=\intL\half |\hat h(-\eta)|^2d\eta,
$$
because of $|\hat h(\eta)|^2=|\hat h(-\eta)|^2.$
The Plancherel formula implies that 
$$
\frac12\intR |h(t)|^2dt=\intL\half |\hat h(\eta)|^2d\eta.
$$
Combining this equation and equation~\eqref{eq:inversion} gives our assertion.
%Consider the inner integral in formula~\eqref{eq:inversion}.
%Then we have
%$$
%\begin{array}{l}
%\displaystyle\intL\half\intL\half\widehat{P_hf}(\sigma\theta, r\theta)\hat{h}(r\sigma)e^{i\sigma \theta\cdot x}\sigma^ndrd\sigma=
%\displaystyle\intL\half\intL_{\theta^\perp}\intR P_hf(u+\tau\theta,r\theta)\intL\half\hat{h}(r\sigma)e^{i\sigma (\theta\cdot x-\tau)}\sigma^nd\sigma d\tau dudr\\
%=(-i)^n\displaystyle\intL\half\intL_{\theta^\perp}\intR P_hf(u+\tau\theta,r\theta)r^{-n-1}\intL\half\widehat{h^{(n)}}(\sigma)e^{i\sigma (\theta\cdot x-\tau)/r}d\sigma d\tau dudr,
%\end{array}
%$$
%where in the last line, we changed variables $\sigma\rightarrow\sigma/r$.
\end{proof}
%From now on, we restrict the domain of $u$ to the line.
%Without loss of generality, we assume $u$ is located in the $x_1$-axis.
%\begin{rmk}
%Similarly to equation~\eqref{newnegative}, we have
%\begin{equation}\label{newpositive}
%\intL^\infty_0\widehat{P_hf}(-\sigma \theta,r\theta)\hat{h}(r\sigma)dr=\hat{f}(-\sigma\theta)|\sigma|^{-1}\intL^\infty_0|\hat{h}(t)|^2dt.
%\end{equation}
%If $h\neq 0$ on $[0,\infty)$, we have a similar inversion as in remark~\ref{rmk:sign}.
%%If not, it will complete the proof to use equation~\eqref{newpositive} instead of~\eqref{newnegative}.
%\end{rmk}
\begin{thm}\label{oneline}
Let $h$ be non-vanishing at a point $a\in\RR$. 
For $f\in\mathcal{S}(\RR^n)$, we have for $u=(u_1,u')\in\RR\times\RR^{n-1}$ 
$$
|\sigma|\widehat{P_hf}(\sigma,u',a\sigma,v')=2\pi\displaystyle\hat{f}(\sigma,av'+u')h(a).
$$
Here $\hat{f}$ is the Fourier transform of $f$ with respect to the first variable $x_1$ and $\widehat{P_hf}$ is the 2-dimensional Fourier transform of $P_hf$ with respect to the two variable $(u_1,v_1)$.
\end{thm}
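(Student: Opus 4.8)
The plan is to mimic the strategy of the previous two theorems—exploiting the translation structure of $P_h$ via a Fourier transform—but now to isolate a \emph{single} scalar pair of variables rather than integrate over all directions. Writing $u=(u_1,u')$ and $v=(v_1,v')$ with $u_1,v_1\in\RR$ and $u',v'\in\RR^{n-1}$, I would compute the two-dimensional Fourier transform of $P_hf$ in the pair $(u_1,v_1)$ directly from the definition $P_hf(u,v)=\intR f(u+tv)h(t)\,dt$, treating $u'$ and $v'$ as spectators and letting $\sigma$, $\eta_1$ denote the frequencies dual to $u_1$, $v_1$.

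First I would insert the definition and substitute $w_1=u_1+tv_1$ in the $u_1$-integral (for each fixed $t$ and $v_1$), which converts that integral into the one-dimensional Fourier transform $\hat f(\sigma,u'+tv')$ of $f$ in its first slot. The remaining $v_1$-integral then carries only the exponential $e^{-iv_1(\eta_1-t\sigma)}$, and performing it produces $2\pi\,\delta(\eta_1-t\sigma)$, so that
$$
\widehat{P_hf}(\sigma,u',\eta_1,v')=2\pi\intR h(t)\,\hat f(\sigma,u'+tv')\,\delta(\eta_1-t\sigma)\,dt.
$$
Next I would evaluate the $t$-integral against the delta: writing $\delta(\eta_1-t\sigma)=|\sigma|^{-1}\delta\big(t-\eta_1/\sigma\big)$ for $\sigma\neq0$ pins the integration to $t=\eta_1/\sigma$ and contributes the Jacobian factor $|\sigma|^{-1}$, which is precisely what later appears as the multiplier $|\sigma|$ on the left-hand side. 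Specializing to the slice $\eta_1=a\sigma$ forces $t=a$, whence $h(t)=h(a)$ and $\hat f(\sigma,u'+tv')=\hat f(\sigma,u'+av')$, giving
$$
\widehat{P_hf}(\sigma,u',a\sigma,v')=\frac{2\pi}{|\sigma|}\,h(a)\,\hat f(\sigma,av'+u'),
$$
and multiplying through by $|\sigma|$ yields the assertion.

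The main point requiring care is the appearance and evaluation of the Dirac delta: strictly this is a distributional statement, and the cleanest rigorous reading is that the $v_1$-integral is an inverse Fourier transform whose mass concentrates on the hyperplane $\eta_1=t\sigma$, so that restricting $\widehat{P_hf}$ to $\eta_1=a\sigma$ selects exactly the single value $t=a$ of the window argument. I would also want to check integrability (or interpret the transforms in the tempered sense) to justify the interchange of the $t$-, $u_1$-, and $v_1$-integrations, as implicitly done in the earlier theorems. The hypothesis $h(a)\neq0$ is what makes the slice informative—it guarantees the factor $h(a)$ on the right is nonzero, so the identity genuinely recovers $\hat f(\sigma,av'+u')$, and hence $f$ after an inverse Fourier transform, from a single two-dimensional slice of $P_hf$.
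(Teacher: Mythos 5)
Your proposal is correct and follows essentially the same route as the paper's proof: take the Fourier transform in $u_1$ to produce $\hat f(\sigma,u'+tv')e^{itv_1\sigma}$, then the $v_1$-integration yields a Dirac delta that pins $t=a$ and contributes the Jacobian factor $|\sigma|^{-1}$. The only cosmetic difference is that you compute the transform at a general frequency $\eta_1$ and then restrict to the slice $\eta_1=a\sigma$, whereas the paper evaluates directly at $a\sigma$; the computations are identical.
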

\begin{proof}

Taking the Fourier transform of $P_hf(u,v)$ with respect to $u_1$ yields
$$
\intR P_hf(u,v)e^{-i\sigma u_1}du_1=\intR\hat{f}(\sigma,u'+tv')e^{itv_1\sigma}h(t)dt.
$$
%where $\hat{f}$ is Fourier transforms of $f$ with respect to the first variable $x_1$.
To get $\hat{f}$, multiplying $e^{-iav_1\sigma}$ and integrating with respect to $v_1$ give
$$
\begin{array}{ll}
\displaystyle\intR\intR P_hf(u,v)e^{-i(av_1+u_1)\sigma}du_1dv_1&=\displaystyle\intR\intR\hat{f}(\sigma,tv'+u')e^{itv_1\sigma}h(t)e^{-iav_1\sigma}dtdv_1\\
&=\displaystyle\intR\hat{f}(\sigma,tv'+u')h(t)\intR e^{i(t-a)v_1\sigma}dv_1dt\\
&=2\pi\displaystyle\intR\hat{f}(\sigma,tv'+u')h(t)\frac{\delta(t-a)}{|\sigma|}dt\\
&=2\pi\displaystyle\hat{f}(\sigma,av'+u')h(a)|\sigma|^{-1}.
\end{array}
$$
\end{proof}
\begin{rmk}
Theorem~\ref{oneline} leads naturally to a Fourier type inversion formula, supplementing the inverse Fourier transform.
\end{rmk}
\begin{rmk}
Even if the domain of $u$ is restricted to a line, say $x_1$-axis, then we get the analogue of Theorem~\ref{oneline}, i.e., for $a\in\RR$ with $h(a)\neq 0$,
$$
|\sigma|\widehat{P_hf}(\sigma,a\sigma,v')=2\pi\displaystyle\hat{f}(\sigma,av')h(a),
$$
so we can still reconstruct $f$ from $P_hf$.
\end{rmk}
\begin{rmk}
The great point of Theorem~\ref{oneline} is that we don't need to know the whole information of $h$.
Nonzero value of $h$ at only one point is required.
\end{rmk}
When $n=2$, we can get a series formula for an inversion of the windowed ray transform, by using circular harmonics.
Consider $P_hf(u,u^\perp)$ where $u^\perp=(-u_2,u_1)$.
Let $g(\rho,\theta)$ be the function $P_hf(u,u^\perp)$ where $\rho=|u|$ and $\theta=u/|u|$, and let $f(r,\phi)$ be the image function in polar coordinates.
Then the Fourier series of $f$ and $g$ with respect to their angular variables can be written as
$$
f(r,\phi)=\sum^\infty_{l=-\infty}f_l(r)e^{il\phi},\qquad g(\rho,\theta)=\sum^\infty_{l=-\infty}g_l(\rho)e^{il\theta},
$$
where the Fourier coefficients are given by
$$
f_l(r)=\frac{1}{2\pi}\intL^{2\pi}_0f(r,\phi)e^{-il\phi}d\phi,\qquad g_l(\rho)=\frac{1}{2\pi}\intL^{2\pi}_0g(\rho,\theta)e^{-il\theta}d\theta.
$$ 
%This starts from the following theorem: 
\begin{thm}
Let $f\in C^\infty_c(\RR^2)$. If $h\in C^\infty_c(\RR)$ is not odd, then we have 
\begin{equation}\label{eq:mellinoff_l}
\mathcal Mg_l(s)= \mathcal Mf_l(s+1) \mathcal MH(s),
\end{equation}
 where $\mathcal M$ is the Mellin transform and
$$
H(r)=\left\{\begin{array}{ll}\left[h\left(\sqrt{\dfrac{1}{r^2}-1}\right)+h\left(- \sqrt{\dfrac{1}{r^2}-1}\right)\right] \dfrac{ e^{il\arccos r}}{\sqrt{1-r^2}} & \mbox{ if } r<1,\\
0&\mbox{otherwise}.\end{array}\right.
$$ 
\end{thm}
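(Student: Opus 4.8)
The plan is to exploit the rotational symmetry of the problem by passing to circular harmonics, which reduces the two–dimensional inversion to a one–parameter family of one–dimensional identities, each of which is a multiplicative convolution that the Mellin transform diagonalizes. First I would record the geometry of the integration line in polar coordinates. Writing $u=\rho(\cos\theta,\sin\theta)$ and $u^\perp=\rho(-\sin\theta,\cos\theta)$, the point $u+tu^\perp$ has modulus $\rho\sqrt{1+t^2}$ and angular coordinate $\theta+\arctan t$, since $u$ and $u^\perp$ are orthogonal of equal length $\rho$. Substituting the expansion $f(r,\phi)=\sum_l f_l(r)e^{il\phi}$ into $g(\rho,\theta)=P_hf(u,u^\perp)=\intR f(u+tu^\perp)h(t)\,dt$ and using orthogonality of the $e^{il\theta}$ isolates each angular mode, giving
$$g_l(\rho)=\intR f_l(\rho\sqrt{1+t^2})\,e^{il\arctan t}\,h(t)\,dt.$$

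The heart of the argument is to recognize this as a multiplicative convolution. I would change the integration variable from $t$ to the radius $r=\rho\sqrt{1+t^2}$ (equivalently to the ratio $\rho/r$), treating the two half–lines $t>0$ and $t<0$ separately: each circle of radius $r>\rho$ is met twice, at $t=\pm\sqrt{r^2/\rho^2-1}$, and this is exactly the source of the two summands $h(\sqrt{1/r^2-1})$ and $h(-\sqrt{1/r^2-1})$ appearing in the kernel $H$. The Jacobian of the substitution supplies the factor $(1-(\rho/r)^2)^{-1/2}$, while the relation $\arctan t=\arccos(\rho/r)$ converts the phase $e^{il\arctan t}$ into the factor $e^{il\arccos(\rho/r)}$. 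Collecting these pieces exhibits $g_l$ as a multiplicative convolution of $f_l$ against $H$, the kernel being supported in $r<1$ because the line through $\rho\theta$ perpendicular to $\theta$ meets only circles of radius $\ge\rho$.

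Finally I would apply the Mellin convolution theorem. Writing $\mathcal Mg_l(s)=\intL_0^\infty g_l(\rho)\rho^{s-1}\,d\rho$, interchanging the order of integration (legitimate since $f\in C^\infty_c(\RR^2)$ and $h\in C^\infty_c(\RR)$), and substituting $\rho=r\sigma$ in the inner integral factors the double integral as a product of a Mellin transform of $f_l$ and a Mellin transform of $H$, yielding exactly~\eqref{eq:mellinoff_l}; the shift to $s+1$ on the $f_l$ factor reflects the bare radial measure left over after the change of variables. The one genuinely delicate point is this middle step: one must keep careful track of the two branches and the Jacobian to land precisely on $H$ with the correct power of $(1-r^2)$, the correct phase, and the correct index shift, and it is here that the hypothesis that $h$ is \emph{not odd} enters. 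If $h$ were odd, the contributions of the two branches would cancel, $H$ would vanish identically, and~\eqref{eq:mellinoff_l} would carry no information; since $h$ is not odd, $H\not\equiv 0$, so $\mathcal MH$ is not identically zero and the identity determines $\mathcal Mf_l$, hence $f_l$, and hence $f$.
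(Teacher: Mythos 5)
Your opening reduction is correct and is actually cleaner than the paper's own proof: parametrizing $u+tu^{\perp}$ in polar coordinates and using orthogonality gives
$$
g_l(\rho)=\int_{\mathbb{R}} f_l\big(\rho\sqrt{1+t^2}\big)\,e^{il\arctan t}\,h(t)\,dt,
$$
with no Dirac deltas (the paper instead rewrites the line integral with a $\delta$ and collapses it later; the two routes meet at this same one-dimensional integral). The problem is the middle step, which you yourself flag as ``genuinely delicate'' but then assert rather than carry out --- and carried out, it does not land on $H$. First, the phase: $\arctan t=\arccos(\rho/r)$ holds only on the branch $t>0$; on the branch $t<0$ one has $\arctan t=-\arccos(\rho/r)$. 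So the two branches contribute $h\big(\sqrt{r^2/\rho^2-1}\big)e^{il\arccos(\rho/r)}$ and $h\big(-\sqrt{r^2/\rho^2-1}\big)e^{-il\arccos(\rho/r)}$: the phases are conjugate and cannot be pulled out as the single common factor $e^{il\arccos r}$ that appears in $H$ (except when $l=0$). Second, the Jacobian: from $r=\rho\sqrt{1+t^2}$ one gets $dt=\frac{r\,dr}{\rho\sqrt{r^2-\rho^2}}=\rho^{-1}\big(1-(\rho/r)^2\big)^{-1/2}dr$, not $\big(1-(\rho/r)^2\big)^{-1/2}dr$; the factor $\rho^{-1}$ you dropped shifts the Mellin index. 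Done carefully, your computation yields $g_l(\rho)=\rho^{-1}\big[(rf_l)\times\widetilde H\big](\rho)$ (in the paper's notation for multiplicative convolution), where $\widetilde H$ is $H$ with the phases $e^{\pm il\arccos r}$ attached to the respective summands, and hence
$$
\mathcal M g_l(s)=\mathcal M f_l(s)\,\mathcal M\widetilde H(s-1),
\qquad\text{equivalently}\qquad
\mathcal M g_l(s+1)=\mathcal M f_l(s+1)\,\mathcal M\widetilde H(s),
$$
which is not the claimed identity $\mathcal M g_l(s)=\mathcal M f_l(s+1)\,\mathcal M H(s)$.

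You should know that both discrepancies trace back to the paper's own proof, which commits the parallel errors: its identity $\int_{\mathbb{R}} f(u+tu^{\perp})h(t)\,dt=\int_{\mathbb{R}^2} f(x)\,h\big(\frac{x\cdot u^{\perp}}{|u|^2}\big)\,\delta\big(|u|-x\cdot\frac{u}{|u|}\big)\,dx$ is off by a factor $1/|u|$ (arc length along the line is $|u|\,dt$, not $dt$), and when it evaluates $\delta(\rho-r\cos\theta)$ it assigns the phase $e^{il\arccos(\rho/r)}$ to both roots $\theta=\pm\arccos(\rho/r)$, although they carry conjugate phases. So your plan, executed in full, would not prove the theorem as printed; it would prove a corrected version of it --- which is in fact the more valuable outcome. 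A final small point: with the corrected kernel $\widetilde H$, oddness of $h$ no longer makes the kernel vanish identically, so the hypothesis that $h$ is not odd does not play the role you assign to it in the identity itself; it is relevant only for guaranteeing $\mathcal M H\not\equiv 0$, i.e., for the inversion in the subsequent corollary.
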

\begin{proof}
We can write $P_hf(u,u^\perp)$ as
$$
%\begin{array}{ll}
%P_hf(u,u^\perp)\displaystyle=
\intR f(u+tu^\perp)h(t)dt=\intRR f(x)h\left(\frac{x\cdot u^\perp}{|u|^2}\right)\delta\left(|u|-x\cdot \frac{u}{|u|}\right)dx,
$$
where $\delta$ is the Dirac delta function.
%Let us denote $g(\rho,\theta)$ by the function of $P_hf(u,u^\perp)$ in the polar coordinate $(\rho,\theta)$, where $\rho=|u|$ and $\theta=u/|u|$, and let $f(r,\phi)$ be the image function in polar coordinates.
%Then the Fourier series of $f$ and $g$ with respect to their angular variables can be written as
%$$
%f(r,\phi)=\sum^\infty_{l=-\infty}f_l(r)e^{il\phi},\qquad g(\rho,\theta)=\sum^\infty_{l=-\infty}g_l(\rho)e^{il\theta},
%$$
%where the Fourier coefficients are given by
%$$
%f_l(r)=\frac{1}{2\pi}\intL^{2\pi}_0f(r,\phi)e^{-il\phi}d\phi,\qquad g_l(\rho)=\frac{1}{2\pi}\intL^{2\pi}_0g(\rho,\theta)e^{-il\theta}d\theta.
%$$ 
Then we have
$$
\begin{array}{ll}
g_l(\rho)&\displaystyle=\frac{1}{2\pi}\intL^{2\pi}_0g(\rho,\theta)e^{-il\theta}d\theta\\
&\displaystyle=\frac{1}{2\pi}\intL^{2\pi}_0\intRR f(x)h\left(\frac{x\cdot (-\sin\theta,\cos\theta)}{\rho}\right)\delta(\rho-x\cdot (\cos\theta,\sin\theta))dx\;e^{-il\theta}d\theta.
\end{array}
$$
Changing variables $x\rightarrow (r,\phi)\in[0,\infty)\times[0,2\pi)$ gives
$$
\begin{array}{ll}
g_l(\rho)&\displaystyle=\frac{1}{2\pi}\intL^{2\pi}_0\intL\half \intL^{2\pi}_0  f(r(\cos\phi,\sin\phi))h\left(\frac{r \sin(\phi-\theta)}{\rho}\right) \delta(\rho-r \cos(\phi-\theta))re^{-il\theta}dr d\phi d\theta\\
&\displaystyle=\frac{1}{2\pi}\intL^{2\pi}_0\intL\half\intL^{2\pi}_0  f(r(\cos\phi,\sin\phi))h\left(\frac{r \sin(\phi-\theta)}{\rho}\right) \delta(\rho-r \cos(\phi-\theta))r e^{-il\theta} drd\phi d\theta \\ 
&\displaystyle=\frac{1}{2\pi}\intL^{2\pi}_0\intL\half\intL^{2\pi}_0  f(r(\cos\phi,\sin\phi))h\left(-\frac{r \sin\theta}{\rho}\right) \delta(\rho-r \cos\theta)r e^{-il\theta-il\phi}drd\phi d\theta,
\end{array}
$$
where in the last line, we changed variables $\theta\rightarrow \theta+\phi$.
Continuing to compute $g_l$, we get
$$
\begin{array}{ll}
g_l(\rho)&\displaystyle=\intL\half\intL^{2\pi}_0  f_l(r)h\left(\frac{r \sin\theta}{\rho}\right) \delta(\rho-r \cos\theta)r e^{il\theta}drd\theta\\
&\displaystyle=\intL^\infty_\rho f_l(r)\left[h\left(\frac{r}{\rho} \sqrt{1-\left(\frac{\rho}{r}\right)^2}\right)+h\left(-\frac{r}{\rho} \sqrt{1-\left(\frac{\rho}{r}\right)^2}\right)\right] \frac{r e^{il\arccos\frac{\rho}{r}}}{\sqrt{r^2-\rho^2}}dr\\
&=\displaystyle (rf_l(r))\times H(\rho),
\end{array}
$$
where 
$$
f\times H(s)=\intL^\infty_0 f(r)H\left(\frac{s}{r}\right)\frac{dr}{r}.
$$

Taking the Mellin transform $\mathcal M$ of $g_l$ and the property that $\mathcal M[rf(r)](s)=\mathcal Mf(s+1)$ complete the proof.
\end{proof}

\begin{cor}
 Let $f_l(r)$ be the $l$-th Fourier coefficient of the twice differentiable function $f$ with compact support. Then for any $t>1$ we have
\begin{equation}\label{fn-eq}
%\frac{f_n(r+0)+f_n(r-0)}{2}
f_l(r)=\displaystyle\lim_{T\rightarrow \infty}\frac{1}{2\pi i}\intL^{t+T i}_{t-T i}{r}^{-s}\dfrac{\mathcal Mg_l(s-1)}{\mathcal MH(s-1)}\,ds.
\end{equation}
\end{cor}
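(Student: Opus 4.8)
The plan is to obtain~\eqref{fn-eq} by solving the multiplier identity~\eqref{eq:mellinoff_l} for $\mathcal{M}f_l$ and then applying the Mellin inversion theorem. Recall that the Mellin transform is $\mathcal{M}\varphi(s)=\int_0^\infty \varphi(r)r^{s-1}\,dr$, with inversion $\varphi(r)=\frac{1}{2\pi i}\int_{c-i\infty}^{c+i\infty}\mathcal{M}\varphi(s)\,r^{-s}\,ds$ for $c$ in the fundamental strip. The previous theorem gives $\mathcal{M}g_l(s)=\mathcal{M}f_l(s+1)\,\mathcal{M}H(s)$; replacing $s$ by $s-1$ yields
$$
\mathcal{M}g_l(s-1)=\mathcal{M}f_l(s)\,\mathcal{M}H(s-1),
$$
so that on any vertical line on which $\mathcal{M}H(s-1)\neq0$ I may write $\mathcal{M}f_l(s)=\mathcal{M}g_l(s-1)/\mathcal{M}H(s-1)$. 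Thus the integrand in~\eqref{fn-eq} is nothing but $r^{-s}\mathcal{M}f_l(s)$, and the corollary is exactly Mellin inversion for $f_l$ along the line $\operatorname{Re}s=t$.

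The next step is to justify the choice of contour and the convergence of the integral. Because $f$ has compact support, $f_l$ is supported in some interval $[0,R]$, and since it is the $l$-th circular harmonic of a smooth function it vanishes to order $|l|$ at the origin; hence $\mathcal{M}f_l$ is analytic in a right half-plane containing $\operatorname{Re}s>0$. Taking $t>1$ places the contour comfortably inside the common fundamental strip of all the transforms involved, in particular to the right of the abscissa of convergence of $s\mapsto\mathcal{M}g_l(s-1)$, which is where the shifted identity above is valid. The hypothesis that $f$ is twice differentiable is used here: integrating by parts twice in $\mathcal{M}f_l(t+i\tau)=\int_0^R f_l(r)r^{t+i\tau-1}\,dr$ shows $\mathcal{M}f_l(t+i\tau)=O(\tau^{-2})$ as $|\tau|\to\infty$, so the inversion integral converges absolutely and represents $f_l(r)$ pointwise, which would give~\eqref{fn-eq} once the limit in $T$ is inserted.

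The main obstacle is the division by $\mathcal{M}H(s-1)$. The assumption that $h$ is not odd only guarantees $H\not\equiv0$, not that $\mathcal{M}H$ is zero-free: as an analytic function $\mathcal{M}H(s-1)$ may have isolated zeros on the line $\operatorname{Re}s=t$, at which the quotient in~\eqref{fn-eq} is a priori $0/0$. I would resolve this by reading the quotient through the factored identity $\mathcal{M}g_l(s-1)=\mathcal{M}f_l(s)\,\mathcal{M}H(s-1)$: at any such zero $\mathcal{M}g_l(s-1)$ vanishes to at least the same order, so the singularity of $\mathcal{M}g_l(s-1)/\mathcal{M}H(s-1)$ is removable and the quotient extends analytically to $\mathcal{M}f_l(s)$. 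Interpreting~\eqref{fn-eq} with these removable singularities removed---equivalently, integrating $r^{-s}\mathcal{M}f_l(s)$---the argument of the preceding paragraph would then apply verbatim and complete the proof.
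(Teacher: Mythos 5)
Your proposal is correct and follows essentially the same route as the paper: solve the identity $\mathcal{M}g_l(s-1)=\mathcal{M}f_l(s)\,\mathcal{M}H(s-1)$ for $\mathcal{M}f_l$, establish analyticity of $\mathcal{M}f_l$ in $\{\Re s>1\}$ and the $O(|s|^{-2})$ decay on vertical lines by integrating by parts twice (using compact support and twice differentiability), and then invoke Mellin inversion. Your additional discussion of possible zeros of $\mathcal{M}H(s-1)$, resolved as removable singularities via the factorization, is a point of rigor the paper's proof silently glosses over, but it does not change the underlying argument.
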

\begin{proof}
For $a>1$ and $b\in\mathbb{R}$, $|\mathcal Mf_l(a+bi)|$ is finite because  
$$
\intL^\infty_0r^{a+bi-1}|f_l(r)|dr\leq C\intL^R_0r^{a-1}|e^{ib \ln r}|dr
$$
 where $C$ is the upper bound of $|f_l|$ and $R$ is radius of a ball containing supp $f$. 
Thus, $\mathcal Mf_l(s)$ is analytic on $\{z\in\mathbb{C}:\Re z >1\}$.
Integrating by parts twice, we get
$$
\mathcal Mf_l(s)=\displaystyle\intL^\infty_0f_l''(r)\dfrac{r^{s+1}}{s(s+1)} d\rho,
$$
which implies $\mathcal Mf_l(s)=O(s^2)$.
Hence $\mathcal Mf_l(t+si)$ is integrable and we can apply the inverse Mellin transform~\cite{flajolet95,titchmarsh37} which gives formula (\ref{fn-eq}).
\end{proof}
\section{Conclusion}
%%%%%%%%%%%%%%%%%%%%%%%
We study the windowed ray transform, a general form of the analytic-signal transform. Several different inversion formulas of the windowed ray transform are provided. While in~\cite{kaisers93} the condition that $h$ is admissible is required, it is not in ours.% We also describe the range of this transform.%Given some transform, the range condition as well as inverse of this transform becomes interested issues~\cite{natterer01,nattererw01}. Hence, the range condition and stability estimate is the natural research to undertake.
\section*{Acknowledgements}
The author thanks P Kuchment for fruitful discussions.
This work was supported in part by US NSF Grants DMS 0908208 and DMS 1211463.
\nocite{kaiser87,kaiser09,nattererw01,ehrenpreiskp00,kaiser90}
\bibliographystyle{plain}

%\bibliography{s.moonref}

\end{document}